\newtheorem{theorem}{Theorem}[section]
\newtheorem{lemma}[theorem]{Lemma}
\newtheorem{conjecture}[theorem]{Conjecture}
\theoremstyle{definition}
\newtheorem{remark}[theorem]{Remark}
\numberwithin{equation}{section}
\newcommand{\E}{\mathbb{E}}
\newcommand{\R}{\mathbb{R}}
\newcommand{\C}{\mathbb{C}}
\newcommand{\N}{\mathbb{N}}
\newcommand{\Z}{\mathbb{Z}}
\newcommand{\T}{\mathbb{T}} 
\newcommand{\Vol}{\mathrm{Vol}}
\newcommand{\e}{\varepsilon}
\DeclareMathOperator{\tr}{Tr}
\author{Stanislaw J. Szarek and  Pawel Wolff}
\address[S. Szarek]{Case Western Reserve University, Cleveland, Ohio 44106-7058, U.S.A. \ and \ Institut de Math\'ematiques de Jussieu-PRG, 
Sorbonne Universit\'e, 4 place Jussieu, 75252 Paris cedex 05, France}
\email{szarek@cwru.edu}
\address[P. Wolff]{Institut de Math\'ematiques de Toulouse, Universit\'e de Toulouse \& CNRS, UPS, F-31062 Toulouse Cedex 09, France; Current address: ECMWF, Robert-Schuman-Platz 3, 53175 Bonn, Germany}
\email{pawel.wolff@ecmwf.int}
\keywords{Dvoretzky's theorem, radii of spherical sections, L1-norm minimization}
\subjclass[2010]{46B20, 52A40, 68P30}
\title{Radii of Euclidean sections of $\ell_p$-balls}
\begin{document}
\begin{abstract}
The celebrated Dvoretzky theorem asserts that every $N$-dimensional convex body admits central sections of dimension $d = \Omega(\log N)$, which are nearly spherical. For many instances of convex bodies,  typically unit balls with respect to some norm, much better lower bounds on $d$ have been obtained, with most research focusing on such lower bounds and on the degree of approximation of the section by a $d$-dimensional Euclidean ball. In this note we concentrate on another parameter, namely the {\sl radius } of the approximating ball. 
We focus on the case of the unit ball of the space $\ell_1^N$ (the so-called {\em cross-polytope}), which is relevant to various questions of interest in theoretical computer science.  We will also survey other instances where similar questions for other normed spaces (most often $\ell_p$-spaces or their non-commutative analogues) were found relevant to problems in various areas of mathematics and its applications, and state some open problems. Finally, in view of the computer science ramifications, we will comment on the algorithmic aspects of finding nearly spherical sections.
\end{abstract}

\maketitle

\section{Introduction} \label{section1}
Consider $\R^N$ with two norms: $\ell_1$ and $\ell_2$. Although for an arbitrary vector $x \in \R^N$ the discrepancy between these two norms can be as large as $\sqrt{N}$, i.e., $\|x\|_2 \le \|x\|_1 \le \sqrt{N} \|x\|_2$ (and these estimates are sharp), it is a classical fact that there exist subspaces of $\R^N$ of dimensions even proportional to $N$ 
 (i.e., $\Omega(N)$), on which the two norms are comparable. More specifically, for any fixed $\varepsilon > 0$ there exists a subspace $E \subseteq \R^N$ of dimension at least $c(\varepsilon) N$ such that, for all $x \in E$,
\begin{eqnarray}
  \label{eq:almost-euclidean-1}
  (1 - \varepsilon)\lambda \|x\|_2 \le \|x\|_1 & \\
  \|x\|_1 & \le (1 + \varepsilon) \lambda \|x\|_2
  \label{eq:almost-euclidean-2}
\end{eqnarray}
for some scaling constant $\lambda > 0$  (\cite{FLM77}). In such case $E$ is called an \emph{almost-Euclidean} subspace of $\ell_1^N$. If one denotes $B_1^N$ and $B_2^N$ the unit balls in $\R^N$ that correspond to the norms of $\ell_1$ and of $\ell_2$ respectively, then~\eqref{eq:almost-euclidean-1} and~\eqref{eq:almost-euclidean-2} can be rephrased as
\begin{equation}
  \label{eq:almost-euclidean-3}
  \frac{\lambda^{-1}}{1+\varepsilon}  B_2^N \cap E \subseteq B_1^N \cap E \subseteq \frac{\lambda^{-1}}{1-\varepsilon}  B_2^N \cap E .
\end{equation}
In other words,  $\lambda^{-1}/(1-\varepsilon)$, which is (an upper bound for) the outer radius of a section of $B_1^N$ with $E$, is remarkably close to the in-radius of that section, which is at least $\lambda^{-1}/(1+\varepsilon)$. 
A companion result to this striking instance of Dvoretzky's theorem \cite{Dvoretzky}, in the \emph{isomorphic} regime, asserts that, for any $\eta \in (0,1)$, 
there exists a subspace $E \subseteq \R^N$ of codimension at most $\eta N$ such that, for all $x \in E$,
\begin{equation}
  \label{eq:nearly-euclidean}
 c(\eta)  \sqrt{N} \, \lambda \|x\|_2 \le  \|x\|_1  \le  \sqrt{N} \|x\|_2,  
  \end{equation}
see \cite{Kashin77}. (Such $E$ is then called a
{\em nearly-Euclidean} subspace.) It needs to be emphasized that proofs of existence of large almost-Euclidean (or nearly-Euclidean) subspaces are generally probabilistic and (except for very special instances, see Section \ref{section4}) do not yield subspaces $E$ that are explicit or can be efficiently exhibited.

In this note we are interested in sharp bounds for the scaling constant $\lambda$ in \eqref{eq:almost-euclidean-1}--\eqref{eq:nearly-euclidean} or, equivalently, for the Euclidean radius of the corresponding section.  
In known constructions of almost-Euclidean subspaces of $\ell_1^N$ of proportional dimension, 
$\lambda$ is of order $\sqrt{N}$, and it follows from \eqref{eq:nearly-euclidean} 
that this a necessity and not an artifact of the method. 
Below (in Theorem \ref{thm:bound-on-radius}) we will make this assertion more precise by showing that $\lambda$ is  between $\sqrt{d}$ 
and (essentially) $\sqrt{\frac{2}{\pi}} \,\sqrt{N}$, where $d$ is the dimension of the subspace.  
 In terms of %almost-Euclidean high-dimensional 
 sections of $B_1^N$-balls, our result asserts that for \emph{ any} $d$-dimensional section the inner radius is at most $d^{-1/2}$ while the outer radius is (essentially, for large $d$) 
 at least $\sqrt{\frac{\pi}{2}} \, N^{-1/2}$.  
 
 The interest in the value of the scaling constant $\lambda$ stems (for example) from the fact that 
 large nearly-Euclidean subspaces of $\ell_1$-spaces have applications in theoretical computer science, including, but not limited
 to, the area of compressed sensing (see, e.g., \cite{Don06}).  Finding explicit examples of such 
 subspaces, or efficient algorithms to generate them is a major project,  
 see \cite{Indyk} and cf. \cite{GHR}.   
 Existence of subspaces verifying \eqref{eq:almost-euclidean-1} with $\lambda$ close to $\sqrt{N}$ 
 would allow (by arguments from \cite{IndykSzarek10}) for constructions of such large subspaces which --- while still being probabilistic --- would require relatively few random bits. Unfortunately, our result supplies a ``no-go theorem.''
 A result in similar vein %to our Theorem \ref{thm:bound-on-radius} 
 (and similarly motivated) can be found in \cite{MR13}. 

\smallskip \paragraph{\em Notation.} Above and in what follows, 
 $C, c, c_0$ etc. refer to (strictly) positive constants, independent of the instance of the problem, most notably of the dimension. 
 Different occurrences of the same symbol do not necessarily indicate the same numerical value. Similarly, $c(\delta)$ etc. is a positive constant, which depends {\em only } on the parameter $\delta$.  For comparing different quantities, we will freely use the Bachmann-Landau asymptotic notation, which is standard  (particularly) in theoretical computer science : $O(\cdot)$, $o(\cdot)$, $\Omega(\cdot)$, $\Theta(\cdot)$.  
 For a vector $x \in \R^N$ we refer to its coordinates as $x(1), \ldots, x(N)$. 
By $g_1, g_2, \ldots $ we denote independent standard (real-valued, unless otherwise indicated) normal random variables. The  Euclidean length of the standard Gaussian random vector $G^{(d)}= (g_1, \ldots, g_d)$ in $\R^d$ is usually called the Chi distribution (with $d$ degrees of freedom) and often denoted by $\chi_d$.  It is well-known that its mean is  
%will be denoted by $\mu_d$. Recall that
%\[
  %\kappa_d := \E \|G^{(d)}\|_2 = \E (|g_1|^2 + \ldots + |g_d|^2)^{1/2} = \sqrt{2} \;\frac{\Gamma((d+1)/2)}{\Gamma(d/2)}
%\]
\begin{equation} \label{mud}
  \mu_d := \E \|G^{(d)}\|_2 = \E (|g_1|^2 + \ldots + |g_d|^2)^{1/2} = \sqrt{2} \;\frac{\Gamma((d+1)/2)}{\Gamma(d/2)}    
\end{equation}
and that $\frac{\mu_d }{\sqrt{d}}$ increases to $1$ as $d \to \infty$. (See, e.g., Appendix A.2 in \cite{AS17}, but note that the quantity from \eqref{mud} is denoted there by $\kappa_d$.)

\section{The main results} \label{section2}

The main result of this note is the following.

\begin{theorem}\label{thm:bound-on-radius}
Suppose that $E$ is a subspace of $\R^N$ of dimension $d$ ($1 \le d \le N$).
\begin{enumerate}
\item[(i)]  If  \  $\lambda \|x\|_2 \le \|x\|_1$ for all  $x \in E$, % satisfies~\eqref{eq:almost-euclidean-1} 
then
%\[
\begin{equation} \label{lambda_up}
 \lambda \le \sqrt{\frac{2}{\pi}}  \sqrt{N}\times \frac{\sqrt{d}} {\mu_d}.  
\end{equation}
 %\]
\item[(ii)] If  \   $\|x\|_1 \le \lambda' \|x\|_2$  for all  $x \in E$, then 
% $E$ satisfies~\eqref{eq:almost-euclidean-2} for a fixed $\varepsilon > 0$ then
%\[
\begin{equation} \label{lambda_dn}
  \lambda' \ge %\sqrt{\frac{2}{\pi}}  
   \sqrt{d} .
\end{equation}
%\]
\end{enumerate}
\end{theorem}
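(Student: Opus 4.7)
The plan is to test the hypothesis in each part against a carefully chosen random vector lying in $E$: Bernoulli signs for part (ii), where I want to exploit a pointwise inequality between $\ell_1$ and $\ell_2^2$, and a Gaussian vector for part (i), where I need spherical symmetry to factor an expectation.

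For part (ii), I would let $P$ be the orthogonal projection onto $E$ and take $\xi$ uniform in $\{-1,+1\}^N$, so that $P\xi \in E$. The pointwise bound
\[
\|P\xi\|_1 = \sum_i |(P\xi)(i)| \ge \sum_i \xi(i)(P\xi)(i) = \langle \xi, P\xi\rangle = \|P\xi\|_2^2
\]
(using $P = P^2 = P^\top$), combined with $\E\langle \xi, P\xi\rangle = \tr P = d$, yields $\E\|P\xi\|_1 \ge d$. On the other hand, the hypothesis applied pointwise, then averaged and combined with Jensen's inequality, gives $\E\|P\xi\|_1 \le \lambda'\,\E\|P\xi\|_2 \le \lambda'\sqrt{\E\|P\xi\|_2^2} = \lambda'\sqrt{d}$. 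Comparing the two estimates delivers $\lambda' \ge \sqrt{d}$.

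For part (i), I would take $G = \sum_{k=1}^d g_k u_k$ to be a standard Gaussian vector on $E$, where $\{u_k\}$ is any orthonormal basis of $E$. The hypothesis gives $\lambda \le \|G\|_1/\|G\|_2$ almost surely, so
\[
\lambda \le \E\bigl[\|G\|_1/\|G\|_2\bigr].
\]
The central step is to exploit the spherical symmetry of $G$ within $E$: the direction $G/\|G\|_2$ is independent of the magnitude $\|G\|_2 \sim \chi_d$, which legitimizes the factorization
\[
\E\|G\|_1 = \E\|G\|_2 \cdot \E\bigl[\|G\|_1/\|G\|_2\bigr] = \mu_d \cdot \E\bigl[\|G\|_1/\|G\|_2\bigr].
\]
I would then compute $\E\|G\|_1$ coordinatewise: each $G(i) = \sum_k g_k u_k(i)$ is a centered Gaussian of variance $\sum_k u_k(i)^2 = p_{ii}$ (the $i$-th diagonal entry of $P$), hence $\E\|G\|_1 = \sqrt{2/\pi}\sum_i \sqrt{p_{ii}}$. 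Finally, Cauchy--Schwarz gives $\sum_i \sqrt{p_{ii}} \le \sqrt{N\tr P} = \sqrt{Nd}$, which assembles into $\lambda \le \sqrt{2/\pi}\,\sqrt{N}\cdot\sqrt{d}/\mu_d$.

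The only conceptually delicate step is the magnitude/direction factorization in part (i); without the independence property specific to the Gaussian law, the naive substitute $\E\|G\|_1/\E\|G\|_2$ is not a priori an upper bound for $\inf_{x \in E} \|x\|_1/\|x\|_2$. Everything else reduces to bookkeeping with the projection matrix $P$ and the identity $\tr P = d$.
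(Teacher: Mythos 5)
Your proof is correct, and both parts are, at their core, the same arguments the paper uses: a Gaussian vector supported on $E$ for part (i), and random signs plus the trace identity $\tr P = d$ for part (ii). Your version of (ii) is a pleasantly self-contained elementary rendering of the paper's proof, which the authors phrase as the fact that the formal identity $\ell_1\to\ell_2$ is $2$-summing with norm $1$ (a lemma of Pietsch) and then unwind via exactly the computation $\sup_\epsilon \|P\epsilon\|_2^2 \ge \E_\epsilon\langle P\epsilon,\epsilon\rangle = \tr P = d$; your chain $d = \E\langle\xi,P\xi\rangle \le \E\|P\xi\|_1 \le \lambda'\,\E\|P\xi\|_2 \le \lambda'\sqrt{d}$ is the same idea with one extra (harmless) Jensen step.

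One remark on part (i): the magnitude/direction independence is a valid observation but is not actually needed, and your concluding caveat is mistaken. The ``naive substitute'' $\E\|G\|_1/\E\|G\|_2$ \emph{is} an a priori upper bound for $\inf_{x\in E}\|x\|_1/\|x\|_2$, for the trivial reason that $\|G\|_1 \ge \lambda\|G\|_2$ holds pointwise on $E$, so taking expectations immediately gives $\E\|G\|_1 \ge \lambda\,\E\|G\|_2 = \lambda\mu_d$. This is precisely the route the paper takes, and it sidesteps the polar-decomposition independence entirely. Your route through $\lambda \le \E[\|G\|_1/\|G\|_2]$ followed by the factorization is correct but longer, and the independence property you invoke (specific to the Gaussian) is exactly what makes it equivalent to the simpler step; so the ``conceptually delicate step'' you flag is in fact avoidable.
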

\begin{remark} \label{rem1}{\rm (a) It is part (i) of the Theorem that is of primary interest. Of course, once an upper bound is determined, it is natural to ask about a lower bound, and we found it intriguing that such estimate (without additional multiplicative constants) was not readily available in the literature.\\
(b) The assertion (i) simplifies a little if we replace the underlying counting measure by the normalized one, or by any probability measure. We get then, by the same argument, 
\begin{equation} \label{upperbound2}
\lambda \leq \sqrt{\frac{2}{\pi}}\times \frac{\sqrt{d}}{\mu_d}.
\end{equation} 
In the present setting, the bound \eqref{upperbound2} is optimal. For example, for $d=2$, the subspace of $L_1([0,1])$ spanned by $\sin(2\pi t)$ and $\cos(2\pi t)$ saturates the bound, which is $\frac{\sqrt{8}}\pi$. (Saturation for $\ell_1^N$ --- in the limit as $N\to \infty$ --- follows by discretization.) 
For general $d$, the space of linear functions 
on $L_1(S^{d-1})$ (with normalized surface measure) does the trick. Of course, the bound in (ii) is saturated if $E$ is a coordinate subspace. \\
(c) While the assertion (ii) just reiterates the point that if $d=\Omega(N)$, then the scaling constant $\lambda$ must be $\Omega(\sqrt{N})$, the assertion (i) is more tantalizing. It says that  the ratio $\lambda/\sqrt{N}$ {\em can not } be close to $1$ for {\em any } $d>1$ and, for large $d$, must satisfy $\lambda/\sqrt{N} \leq \sqrt{\frac{2}{\pi}} +o(1)$. Recall now that the standard proofs of Dvoretzky's theorem (e.g., \cite{FLM77}, which essentially follows \cite{Milman}) lead to the scaling constant that is 
the average (or the median) of the underlying norm over the Eucldean sphere. 
As is well known (see, e.g., Appendix A.2 in \cite{AS17}), in the case of the $\ell_1^N$-norm that average is precisely $\frac{N \mu_1}{\mu_N}$, which --- as $N\to \infty$ --- is of order 
$\sqrt{\frac{2}{\pi}} \sqrt{N}$, precisely the same as the upper bound given by our Theorem \ref{thm:bound-on-radius}(i). In other words, our Theorem  shows that such behavior is not an artifact of the method, but a necessary feature.\\
(d) Most of the assertions of Theorem \ref{thm:bound-on-radius} carry over to the complex case, and generalize to statements about subspaces of $\ell_p^N$ for $p\in (1,2)$. However, some of the bounds have to be (necessarily) slightly modified, and we do not have a completely satisfactory version of (ii) for $p>1$. We elaborate on these issues in the next section. } \qed
\end{remark}
\begin{proof}[Proof of Theorem \ref{thm:bound-on-radius} (i)] 
 Let $\{x_1, \ldots, x_d\}$ be an orthonormal basis in $\ell_2^N \cap E \subseteq \R^N$. Consider the random vector $X$ in $\R^N$ given by
\[
  X = \sum_{j=1}^d x_j g_j
\]
which has a Gaussian distribution on $E$. We compute the expectation of the $\ell_1$-norm of $X$. Using the property of the normal distribution that $\sum_{j=1}^d a_j g_j$ has the same distribution as $\big( \sum_{j=1}^d |a_j|^2 \big)^{1/2} g_1$,
\begin{equation} \label{eq:first-moment}
  \E \|X\|_1 = \sum_{k=1}^N \E \bigg| \sum_{j=1}^d x_j(k) g_j \bigg|
= \E|g_1| \sum_{k=1}^N \bigg( \sum_{j=1}^d |x_j(k)|^2 \bigg)^{1/2}.
\end{equation} 
Recall that $\E|g_1| = \mu_1= \sqrt{\frac{2}{\pi}}$. 
Applying Cauchy-Schwarz inequality we obtain
\[ 
  \E \|X\|_1 \le \sqrt{\frac{2}{\pi}} \bigg( \sum_{k=1}^N \sum_{j=1}^d |x_j(k)|^2 \bigg)^{1/2} \sqrt{N}
  = \sqrt{\frac{2}{\pi}} \sqrt{d N}.
\]
On the other hand, by~the assumption of (i),
\[
  \E \|X\|_1 \ge \lambda \E \bigg\| \sum_{j=1}^d x_j g_j \bigg\|_2
= \lambda \E \big( \sum_{j=1}^d |g_j|^2 \big)^{1/2} = \lambda \mu_d,
\]
where the first equality follows from the fact, that $x_1, \ldots, x_d$ are orthonormal in $\ell_2^N$. Combining the two above estimates for $\E \|X\|_1$ we arrive with the desired upper bound for $\lambda$.
\end{proof}

\begin{proof}[Proof of Theorem \ref{thm:bound-on-radius} (ii)]  
 While it is hard to believe that this bound is new, we could not find a reference in the literature. 
The proof is based on the following lemma. 
\begin{lemma} \label{lemma:norm12} {\rm (\cite{Pie67})}  
Let $i_{12} : \ell_1 \to \ell_2$ be the formal identity map. Then $\pi_2(i_{12}) =1$. 
Equivalently, if $T: \ell_2 \to \ell_1$ is a bounded linear operator, then $\|i_{12} \circ T \|_{\rm HS} \leq \|T\|$. 
\end{lemma}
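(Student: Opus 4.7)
The plan is to establish the equivalent Hilbert--Schmidt formulation by direct computation, and then deduce the $\pi_2$ bound via a standard characterization of the $2$-summing norm. For any operator $u : X \to H$ into a Hilbert space $H$ one has
\begin{equation*}
\pi_2(u) \;=\; \sup\bigl\{ \|u \circ T\|_{\rm HS} \,:\, T : \ell_2 \to X,\ \|T\| \le 1 \bigr\}.
\end{equation*}
Indeed, a finite family $(x_j)$ in $X$ corresponds to the operator $T : \ell_2 \to X$ defined by $Te_j = x_j$, whose norm equals the weak $\ell_2$-norm $\sup_{\|x^*\|\le 1}\bigl(\sum_j |x^*(x_j)|^2\bigr)^{1/2}$ (since $\|T\|=\|T^*\|$ and $T^* x^* = (x^*(x_j))_j$), while $\|u \circ T\|_{\rm HS}^2 = \sum_j \|u x_j\|^2$ whenever the target is Hilbert. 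Since $\pi_2(u) \ge \|u\|$ and $\|i_{12}\| = 1$, the task reduces to proving $\|i_{12} \circ T\|_{\rm HS} \le \|T\|$ for every bounded $T : \ell_2 \to \ell_1$. A routine truncation (the HS norm is the supremum over finite compressions) reduces matters further to $T : \ell_2^n \to \ell_1^m$.

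I would represent such $T$ by its matrix $(a_{kj})$, so that $T e_j \in \ell_1^m$ has coordinates $a_{kj}$, and denote the $k$-th row by $c_k := (a_{kj})_{j=1}^n \in \R^n$. The HS norm then rearranges nicely:
\begin{equation*}
\|i_{12} \circ T\|_{\rm HS}^2 \;=\; \sum_{j=1}^n \|T e_j\|_2^2 \;=\; \sum_{k,j} |a_{kj}|^2 \;=\; \sum_{k=1}^m \|c_k\|_2^2.
\end{equation*}

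To bring $\|T\|$ into the picture I would use the sign trick on the $\ell_1$ side: for any $x \in \R^n$, choosing $\varepsilon_k = \mathrm{sign}\,\langle c_k, x\rangle$ gives
\begin{equation*}
\|Tx\|_1 \;=\; \sum_{k=1}^m |\langle c_k, x\rangle| \;=\; \max_{\varepsilon \in \{\pm 1\}^m} \Bigl\langle \sum_{k=1}^m \varepsilon_k c_k,\ x \Bigr\rangle,
\end{equation*}
so, taking the supremum over $\|x\|_2 \le 1$, one gets $\|T\| = \max_\varepsilon \bigl\|\sum_k \varepsilon_k c_k\bigr\|_2$. Averaging over independent Rademacher signs and using their orthogonality in $L_2$ (equivalently, the parallelogram identity) then yields
\begin{equation*}
\|T\|^2 \;\ge\; \E_\varepsilon \Bigl\|\sum_{k=1}^m \varepsilon_k c_k\Bigr\|_2^2 \;=\; \sum_{k=1}^m \|c_k\|_2^2 \;=\; \|i_{12} \circ T\|_{\rm HS}^2,
\end{equation*}
which is the desired inequality.

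The main obstacle is chiefly expository: one must cleanly articulate the standard identity between $\pi_2(u)$ (for $u$ into a Hilbert space) and the supremum of $\|u \circ T\|_{\rm HS}$, and justify the truncation to finite dimensions. The core mathematical content is the slick Rademacher-averaging observation above, which is robust enough that an analogous argument (with complex signs or a Gaussian substitute) handles the complex version mentioned in Remark~\ref{rem1}(d).
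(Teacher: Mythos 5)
Your argument is correct and coincides with the second presentation of the proof in the paper: your identity $\|T\| = \max_{\varepsilon}\bigl\|\sum_k \varepsilon_k c_k\bigr\|_2$ is exactly the paper's $\|T\|=\sup_{\varepsilon}\|T^*\varepsilon\|_2$, and the Rademacher averaging step is the paper's $\E_\varepsilon\langle TT^*\varepsilon,\varepsilon\rangle=\tr TT^*$. The reduction of the $\pi_2$ statement to the Hilbert--Schmidt inequality via the standard supremum formula is also the one used in the paper.
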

Above $\pi_2(\cdot)$ is the $2$-absolutely summing norm and 
$\|A\|_{\rm HS}:= \big(\tr (AA^*)\big)^{1/2}$ --- the Hilbert-Schmidt norm of a Hilbert space operator.  
Since we will only use the second statement of the Lemma,  we refer the reader to 
Chapter 2 of \cite{nicole-book} for the definition and properties of $p$-absolutely summing and related norms. 
What we need to know here is that, for a normed space $X$ and an operator $S: X \to \ell_2$, 
we have  
$$\pi_2(S)= \sup \{\|ST\|_{\rm HS} \; :  \;  \|T\|\leq 1,\, T : \ell_2 \to X \},$$
which readily implies the equivalence between the two statements of the Lemma.

To prove assertion (ii) of the Theorem we apply Lemma \ref{lemma:norm12} with $T = P_E$, 
the orthogonal projection onto a $d$-dimensional subspace $E \subset \R^N$,
considered as an operator from $\ell_2$ to $\ell_1$; 
formally $T= i_{12} \circ P_E$.  Then the norm $\|P_E : \ell_2 \to \ell_1\|$ equals $\max \{\|x\|_1 :  x\in E, \|x\|_2 =1\}$, 
which is $\leq \lambda'$ by hypothesis. 
On the other hand, clearly $\|P_E\|_{\rm HS} =\sqrt{d}$ and the conclusion follows.  
\end{proof}
Lemma \ref{lemma:norm12} seems to have been forgotten --- or overseen --- by subsequent authors,   
%Again, we were surprised by not being able to find the Lemma stated explicitly in the literature. 
who on occasions cited slightly weaker bounds. For example, in \cite{nicole-book} it is noted that 
$\pi_2(i_{12}) \leq \sqrt{2}$ (as a consequence of Khinchine's inequality, Proposition 10.6) 
and $\pi_2(i_{12}) \leq \sqrt{\frac{\pi}{2}}$
(as a consequence of the little Grothendieck inequality, Proposition 10.8). It is conceivable that the reason for this oversight was that most authors were focused on the much more interesting boundedness of the $1$-summing norm $\pi_1(i_{12})$, which is an instance of the main Grothendieck inequality (\cite{Gro53}; see, e.g., Proposition 10.6 and Theorem 10.11 in 
\cite{nicole-book}), and in comparison to which Lemma~\ref{lemma:norm12} might have seemed trivial. Another factor might have been the fact that the language of the original 1967 reference was German. In fact, we were made aware of \cite{Pie67} (thanks, Guillaume!) only after a preliminary version of the present paper was circulated. 

We include below a proof (in fact, three proofs) of Lemma \ref{lemma:norm12}. First, we do it for completeness, given the limited accessibility of the original paper \cite{Pie67}. More importantly, we were not able to obtain a satisfactory generalization of the Lemma (and, as a consequence, of Theorem \ref{thm:bound-on-radius}(ii)) to subspaces of $\ell_p$, $p\in (1,2)$;  supplying the proofs will allow us to state the missing ingredients as conjectures, see Section \ref{section3} for details. 
\begin{proof}[Proof of Lemma \ref{lemma:norm12}]  It is enough to address the finite-dimensional version, 
i.e., with $i_{12} : \ell_1^N \to \ell_2^N$ and $T: \ell_2^m \to \ell_1^N$ 
(in any case, this is the setting that is relevant here).
We need to prove that $\tr A \leq \|T\|^2$, where 
$A=TT^*$ is considered as an operator on $\ell_2$ (formally $A=i_{12}TT^*i_{12}^*=i_{12}TT^*i_{2\infty}$). 
%We will show slightly more, namely that 
Specifically, we will show that
\begin{equation} \label{trace}
   \tr A \leq \|A\|,
\end{equation}
where the norm on the right is $\|A : \ell_\infty^N \to \ell_1^N\|$.
Note that this is equivalent to the previous statement since 
$\|A\| = \|TT^\ast\| = \|T\|^2$.  (This is a standard identity for Hilbert space operators, but is also true if the domain of $T$ is a Hilbert space and the target space is arbitrary.)  
%= \|T\|^2$
%$\|A\| \le \|T\| \|T^\ast\| 
%= \|T\|^2$ (due to submultiplicativity of an operator norm) and $\|A\| \ge %\|T^\ast\|^2$ (due to a simple duality argument).
%Indeed, denoting 
Denoting $A=\big(a_{ij}\big)$, 
we have 
\[
\|A : \ell_\infty^N \to \ell_1^N\| =\max \left\{  \sum_{i,j} a_{ij} \epsilon_i \epsilon_j' \, : \,   |\epsilon_i| = |\epsilon_j'| = 1 \textup{ for $i,j=1,\ldots,N$} \right\}. 
\]
On the other hand, 
\begin{eqnarray*}
\tr A = \sum_{i} a_{ii} = \E_{\bf \epsilon}  \sum_{i,j} a_{ij} \epsilon_i \epsilon_j\leq \max_{\bf \epsilon}  \sum_{i,j} a_{ij} \epsilon_i \epsilon_j , 
\end{eqnarray*}
with both the expectation and the maximum taken over all choices of signs ${\bf \epsilon}= (\epsilon_i)$, $|\epsilon_i| = 1$, 
and \eqref{trace} readily follows.  

Another presentation of the argument is as follows: 
\begin{eqnarray*}
\|T \colon \ell_2 \to \ell_1 \|^2
&=& \sup_{\|x\|_2 = 1} \sup_{\bf \epsilon} \big| \langle T x, {\bf \epsilon} \rangle \big|^2
= \sup_{\bf \epsilon} \sup_{\|x\|_2 = 1} \big| \langle x, T^\ast {\bf \epsilon} \rangle\big|^2 \\
&=& \sup_{\bf \epsilon} \|T^\ast {\bf \epsilon}\|_2^2 \ 
\ge \ \E_{\bf \epsilon} \langle T T^\ast {\bf \epsilon}, {\bf \epsilon} \rangle
= \tr T T^\ast .
\end{eqnarray*}

Finally, here is a direct ``high level'' proof that the $2$-integral norm of $i_{12}$ (and hence $\pi_2(i_{12})$) 
does not exceed $1$. We have 
\[
i_{12} = R \circ j_{\infty 2} \circ R^*
\]
where $j_{\infty 2} : L_\infty \to L_2$ is the formal identity, $R :  L_2 \to \ell_2$ is the (norm $1$) 
Rademacher projection (see \S10 of \cite{nicole-book}), and 
$R^* : \ell_1 \to L_\infty$ the isometric embedding (using the Rademacher functions); 
this yields the needed Pietsch-Grothendieck factorization of $i_{12}$ (cf. \S9  of \cite{nicole-book}). 
\end{proof} 

\begin{remark} \label{rem2} 
{\rm We conclude this section with a few   observations concerning  Theorem \ref{thm:bound-on-radius} and its proof. \\
(a) There are several alternative approaches to establishing (weaker) versions of bounds (i) and (ii) from Theorem \ref{thm:bound-on-radius}. For example, it is proved in \cite{meyer-pajor-sections} that, among all sections of $B_1^N$ (the unit ball in $\ell_1^N$) of given dimension, the coordinate sections have the maximal volume. Therefore, a direct calculation shows that --- under the hypothesis of (ii) --- we must have (cf. \cite{SzarekOnKashin, STJ80})
\[
  \lambda' \ge %\frac{1}{1+\varepsilon} 
  \frac{\Vol_d(B_2^d)^{1/d}}{\Vol_d(B_1^d)^{1/d}}=
  \left(\frac{\pi ^{d/2} \Gamma (d+1)}{2^{d}  \Gamma \left(\frac{d}{2}+1\right)}\right)^{1/d} 
   \ge %\frac{c}{1+\varepsilon}
 \sqrt{\frac{\pi}{2e}} \ \sqrt{d},
\]
where $\Vol_d$ stands for the $d$-dimensional volume. This is not as precise as what our Theorem gives, but the argument is quite robust: the results of \cite{meyer-pajor-sections} are valid for other $p\in (1,2)$   and in the complex case.  \\
(b) The upper bound for $\lambda$ cannot be essentially improved even if we assume that~\eqref{eq:almost-euclidean-1} and~\eqref{eq:almost-euclidean-2} hold simultaneously. Indeed, as pointed out in Remark \ref{rem1}(c),  
%following Theorem \ref{thm:bound-on-radius}, 
the bound given by \eqref{lambda_up} is approximately saturated by the original argument from \cite{FLM77}, which yields both~\eqref{eq:almost-euclidean-1} and~\eqref{eq:almost-euclidean-2}.  
}
\end{remark}

\section{The complex case and the range $1 < p < 2$} \label{section3}

The assertion (i) of Theorem~\ref{thm:bound-on-radius} can be easily extended to subspaces of $\ell_p^N$ over real or complex scalar field, for any $p \in [1,2]$. In what follows, the $L_p$-norm of the Euclidean length of the standard Gaussian random vector $G^{(d)}$ in $\R^d$ will be denoted by $\mu_{d,p}$. A standard calculation shows that
\[
  \mu_{d,p} := \left( \E \|G^{(d)}\|_2^p \right)^{\frac1p} = 
  \sqrt{2} \left( \frac{\Gamma\big((d+p)/2\big)}{\Gamma(d/2)} \right)^{\frac1p}.
\]
Similarly to the case $p=1$, $\frac{\mu_{d,p}}{\sqrt{d}}$ increases to 1 as $d \to \infty$ for any $1 \le p < 2$.
\begin{theorem}\label{thm:general-p-complex-case}
Let $p \in [1, 2]$ and suppose that $E$ is a subspace of $\R^N$ or $\C^N$ of dimension $d$, where $1 \le d \le N$. If $\lambda \|x\|_2 \le \|x\|_p$ for all $x \in E$, then
%\[
%  \lambda \le \mu_{1,p} \times N^{\frac1p - \frac12} \times \frac{\sqrt{d}} {\mu_{d,p}} \quad \text{in the real case.}
%\]
%and
%\[
%  \lambda \le \mu_{2,p} \times N^{\frac1p - \frac12} \times \frac{\sqrt{d}} {\mu_{2d,p}} \quad \text{in the complex case.}
%\]
\begin{equation}\label{1<p<2}
  \lambda \le N^{\frac1p - \frac12} \times \sqrt{d} \times \begin{cases}
    \frac{\mu_{1,p}}{\mu_{d,p}} & \text{in the real case,} \\[1ex]
    \frac{\mu_{2,p}}{\mu_{2d,p}} & \text{in the complex case.}
  \end{cases}
\end{equation} 
\end{theorem}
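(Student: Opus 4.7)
The plan is to adapt the Gaussian averaging argument from the proof of Theorem~\ref{thm:bound-on-radius}(i). The right substitute for the $\ell_1$ computation there is to work with $\E\|X\|_p^{\,p}$ for a suitable Gaussian vector $X \in E$: this expectation splits as a sum over coordinates and so reduces to a single-variable calculation per coordinate, which is exactly the form in which the quantity $\mu_{d,p}$ is most useful.

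In the real case, I would fix an orthonormal basis $\{x_1,\ldots,x_d\}$ of $E \subseteq \R^N$ and set $X = \sum_{j=1}^d g_j\, x_j$. Writing $c_k := \big(\sum_{j=1}^d |x_j(k)|^2\big)^{1/2}$, the rotational invariance of the standard Gaussian gives that $X(k)$ is distributed as $c_k g_1$, so that
\[
\E\|X\|_p^{\,p} \,=\, \sum_{k=1}^N \E|X(k)|^p \,=\, \mu_{1,p}^{\,p}\,\sum_{k=1}^N c_k^{\,p}.
\]
Since $p/2 \le 1$, the power-mean inequality yields $\sum_k c_k^{\,p} \le N^{1-p/2}\big(\sum_k c_k^{\,2}\big)^{p/2} = N^{1-p/2}\, d^{\,p/2}$, where $\sum_k c_k^{\,2} = \sum_j \|x_j\|_2^2 = d$ uses orthonormality. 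The hypothesis gives $\E\|X\|_p^{\,p} \ge \lambda^p\, \E\|X\|_2^{\,p}$, and since $\|X\|_2$ is distributed as $\|G^{(d)}\|_2$, the right-hand side equals $\lambda^p\, \mu_{d,p}^{\,p}$. Taking $p$-th roots produces the real bound.

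For the complex case I would run the same argument with i.i.d.\ standard complex Gaussians $\zeta_j = g_j + i h_j$ and $X = \sum_j \zeta_j\, x_j$. The key observation is the complex analogue of rotational invariance: for any $a_1,\ldots,a_d \in \C$, the sum $\sum_j a_j \zeta_j$ has the same distribution as $\big(\sum_j |a_j|^2\big)^{1/2}\,\zeta_1$, since its real and imaginary parts are i.i.d.\ centered Gaussians with variance $\sum_j |a_j|^2$. Therefore $X(k)$ is distributed as $c_k \zeta_1$, with $|\zeta_1|$ distributed as $\|G^{(2)}\|_2$, giving $\E|X(k)|^p = \mu_{2,p}^{\,p}\, c_k^{\,p}$. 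Using Parseval in the complex Hilbert space $E$, $\|X\|_2^{\,2} = \sum_j |\zeta_j|^2$ is distributed as $\|G^{(2d)}\|_2^{\,2}$, so $\E\|X\|_2^{\,p} = \mu_{2d,p}^{\,p}$. Applying the same power-mean bound and taking $p$-th roots then produces the complex estimate.

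The computation is lossless apart from the single power-mean step, which I expect to be the main point to monitor for sharpness: equality should be approached by subspaces whose coordinate norms $c_k$ are roughly constant across $k$, paralleling the saturation discussion in Remark~\ref{rem1}(b). The only real care needed is in the complex case, where one must pin down the normalization of the standard complex Gaussian so that $|\zeta_1|$ is precisely distributed as $\|G^{(2)}\|_2$; with a different convention spurious powers of $\sqrt{2}$ would appear and the factors $\mu_{2,p}$, $\mu_{2d,p}$ would not match the statement exactly.
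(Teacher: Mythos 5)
Your proof is correct and follows essentially the same route the paper sketches: consider $\E\|X\|_p^p$ for a Gaussian vector $X$ on $E$, split it coordinate-wise using rotational invariance, bound $\sum_k c_k^p$ against $(\sum_k c_k^2)^{p/2}$ via H\"older (your ``power-mean'' step), and compare with $\lambda^p \E\|X\|_2^p = \lambda^p \mu_{d,p}^p$. Your careful treatment of the complex case, including the normalization $\zeta_j = g_j + i h_j$ so that $|\zeta_1|$ is distributed as $\|G^{(2)}\|_2$ and $\|X\|_2$ as $\|G^{(2d)}\|_2$, is exactly what is needed to produce the factors $\mu_{2,p}/\mu_{2d,p}$ in the statement.
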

\begin{remark} \label{rem3}
{\rm
(a) In the complex case and for $p = 1$,
\[
  \lambda \le \frac{\sqrt{\pi}}{2} \sqrt{N} \times \frac{\sqrt{2d}}{\mu_{2d}}
  = \left(\frac{\sqrt{\pi}}{2}+o(1)\right) \sqrt{N}.
\]
(b) The bound \eqref{1<p<2} is nontrivial (i.e., the factor standing by $N^{\frac1p - \frac12}$ is strictly smaller than $1$) for all $d\geq 2$ and all $p\in [1,2)$.
}
\end{remark}
\begin{proof}[Proof of Theorem~\ref{thm:general-p-complex-case}] The proof follows the same lines as that of Theorem~\ref{thm:bound-on-radius}(i), only a few minor changes are needed. First, instead of bounding $\E \|X\|_1$, one should consider $\E \|X\|_p^p$. Next, the Cauchy-Schwarz inequality is to be replaced by H{\"o}lder's inequality. Finally, in the complex case, $g_1, g_2, \ldots$ are independent standard complex Gaussian random variables. 
\end{proof}
%\smallskip 
Let us now discuss possible extensions of the assertion (ii) of Theorem~\ref{thm:bound-on-radius}. For $p = 1$ and the complex case, the result remains valid as does Lemma~\ref{lemma:norm12}. The proof of Lemma~\ref{lemma:norm12} in the complex case is essentially the same as in the real case. The argument based on bounding from below the $\ell_\infty^N \to \ell_1^N$ norm of $A = T T^\ast$ requires only some minor formal changes: one can take the real value of the bi-linear form $\sum_{i,j} a_{ij} \epsilon_i \epsilon'_j$; however, $\epsilon_i$, $\epsilon'_j$ can remain $\pm 1$, since we do not actually need to saturate the norm of $A$ with $\epsilon$'s ranging over the whole unit circle. The alternative argument for the real case of Lemma~\ref{lemma:norm12} carries over to the complex case without any changes.

 For $1 < p < 2$ we do not have a fully satisfactory counterpart to the assertion (ii).\footnote{Most of the results of this paper were obtained in 2013 and shortly thereafter, but we procrastinated with publication, mostly because we hoped to fully resolve the $1<p<2$ case.} As already mentioned in Remark \ref{rem2}(a), using the result of Meyer and Pajor~\cite{meyer-pajor-sections}, one deduces that 
 \begin{equation} \label{pii}
\lambda' \ge c\hskip0.5mm d^{1/p - 1/2},
 \end{equation} 
 and there are several other lines of argument that yield a similar conclusion. We conjecture that inequality \eqref{pii} holds with $c=1$.  However, a proof appealing to \cite{meyer-pajor-sections} (i.e. via comparison of volumes of $B_p^d$- and $B_2^d$-balls) is clearly  suboptimal;  this is true already in the case $p=1$, see Remark \ref{rem2}(a).
 %at the end of Section \ref{section2}. 
 A natural way to generalize Theorem~\ref{thm:bound-on-radius}(ii) to $1 < p \le 2$ with a sharp estimate on $\lambda'$ would be to find a counterpart for Lemma~\ref{lemma:norm12}. Below we propose a few statements as problems/conjectures; they correspond to the different proofs of Lemma~\ref{lemma:norm12} we presented in Section \ref{section2}:

\begin{conjecture} \label{conj}
\smallskip %\noindent
{\rm (a)}  $\pi_{r,2} (i_{p2}) \leq 1$ for $p \in (1,2)$, where $i_{p2} : \ell_p \to \ell_2$ is the formal identity map, 
$\pi_{r,2}(\cdot)$ is the $(r,2)$-absolutely summing norm (see Chapter 2 of \cite{nicole-book}), and $r = 2p/(2-p)$. 

\smallskip \noindent
{\rm (b)} $\|B \colon \ell_{2} \to \ell_p\| \ge \|B\|_{S_{r}}$ with $r=2p/(2-p)$, where $\|A\|_{S_{r}} = (\tr (A^*A)^{r/2})^{1/r}$ is the $r$-Schatten norm. 

\smallskip \noindent
{\rm (c)} If $A$ is positive semi-definite, then $\|A \colon \ell_{p'} \to \ell_p\| \ge \|A\|_{S_{r}}$ with $r=p/(2-p)$ (and, again, $p' = p/(p-1)$). 
\end{conjecture}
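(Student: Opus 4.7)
The plan is to observe first that the three parts of Conjecture~\ref{conj} are equivalent, and then to concentrate on part (c), which is the most concrete formulation. For (a)$\Leftrightarrow$(b), use the standard description of the $(r,2)$-summing norm into Hilbert space, $\pi_{r,2}(T)=\sup\{\|TU\|_{S_r}:U\colon\ell_2\to X,\ \|U\|\le 1\}$ (Chapter~2 of \cite{nicole-book}); applied to $T=i_{p2}$ this rewrites (a) as the assertion $\|U\|_{S_r}\le\|U\colon\ell_2\to\ell_p\|$, which is (b). For (b)$\Leftrightarrow$(c), any positive semi-definite $A\colon\ell_{p'}\to\ell_p$ factors as $A=B^*B$ with $B\colon\ell_{p'}\to\ell_2$; the Hilbert-space identity $\|T^*T\|=\|T\|^2$ gives $\|A\colon\ell_{p'}\to\ell_p\|=\|B\colon\ell_{p'}\to\ell_2\|^2$, and $\sigma_j(B^*B)=\sigma_j(B)^2$ together with $p/(2-p)=\tfrac12\cdot 2p/(2-p)$ yields $\|A\|_{S_{p/(2-p)}}=\|B\|_{S_{2p/(2-p)}}^2$. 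Taking square roots turns (c) into (b) applied to $B$, and conversely (b) applied to $A^{1/2}$ recovers (c).

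Having reduced to a single statement, I would first try to mimic the first proof of Lemma~\ref{lemma:norm12} probabilistically. At $p=1$ the key identity is $\tr A=\E_\epsilon\langle A\epsilon,\epsilon\rangle$ with $\epsilon\in\{\pm 1\}^N$; its higher-power analogue is
\[
  \tr A^k=\E\bigl[\langle A\omega_2,\omega_1\rangle\langle A\omega_3,\omega_2\rangle\cdots\langle A\omega_1,\omega_k\rangle\bigr]
\]
for i.i.d.\ copies $\omega_1,\ldots,\omega_k$ of a centered random vector $\omega\in\R^N$ with $\E\,\omega\otimes\omega=I$. Setting $k=r_c:=p/(2-p)$ and bounding each factor by $|\langle A\omega_{i+1},\omega_i\rangle|\le\|A\colon\ell_{p'}\to\ell_p\|\cdot\|\omega_i\|_{p'}\|\omega_{i+1}\|_{p'}$ would yield $\|A\|_{S_{r_c}}\le\|A\colon\ell_{p'}\to\ell_p\|\cdot\E\|\omega\|_{p'}^2$, so the conjectured $c=1$ would follow from any distribution with $\E\,\omega\otimes\omega=I$ and $\E\|\omega\|_{p'}^2\le 1$. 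Unfortunately the second-moment constraint forces $\E\|\omega\|_2^2=N$, so $\E\|\omega\|_{p'}^2$ cannot be much smaller than $N^{2/p'}$; this is harmless for $p'=\infty$ (Rademachers) but loses a factor of $N^{2/p'}$ for $p'<\infty$. Closing the gap requires exploiting cancellations between consecutive factors instead of bounding each one separately.

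An alternative route is complex interpolation between the known endpoints $p=1$ (where the inequality is Lemma~\ref{lemma:norm12}) and $p=2$ (where both sides equal $\|B\colon\ell_2\to\ell_2\|$). Both the $\ell_p$-norms and the Schatten $S_r$-norms lie on compatible analytic scales, so this route should deliver (b) with a finite constant $C_p$ depending on $p$, consistent with the weaker bound \eqref{pii} already obtained by the Meyer--Pajor inequality. Getting $C_p=1$ would plausibly require generalizing the Pietsch--Grothendieck factorization $i_{12}=R\circ j_{\infty 2}\circ R^*$ from the third proof of Lemma~\ref{lemma:norm12} to a factorization of $i_{p2}$ through an embedding of $\ell_2$ into a suitable $L_{p'}$-space; the $p$-stable laws do furnish such an isometric embedding for $p<2$, but they fail to be square-integrable, which breaks the Hilbert-space projection step that closes the $p=1$ argument.

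The principal obstacle visible in either approach is the ``$p=1$ miracle'': the unit ball of $\ell_\infty^N$ is the hypercube $[-1,1]^N$, whose vertices form a discrete subset of unit $\ell_\infty$-norm on which $\E\,\epsilon\otimes\epsilon=I$, so averaging reproduces $\tr A$ and is controlled pointwise by the $\ell_\infty\to\ell_1$ norm. For $p'<\infty$ the unit ball of $\ell_{p'}^N$ is strictly convex, and no distribution supported on $B_{\ell_{p'}^N}$ can satisfy $\E\,\omega\otimes\omega=cI$ with $c$ bounded away from zero as $N\to\infty$. A genuinely new ingredient---perhaps a direct structural identification of $\|A\|_{S_{r_c}}$ as a supremum over test objects with controlled $\ell_{p'}$-norm---appears to be what is needed for Conjecture~\ref{conj}.
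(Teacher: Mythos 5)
The statement you were asked to address is explicitly a \emph{conjecture} in the paper; the authors do not prove it, and in fact state that they ``were not able to obtain a satisfactory generalization of the Lemma.'' The only proof content the paper supplies for Conjecture~\ref{conj} is the parenthetical remark that ``the statements (a)--(c) are formally equivalent.'' Your proposal is appropriately calibrated: you do not claim to prove the conjecture, and your argument for the equivalence of (a), (b), (c) is essentially the one the paper has in mind. For (a)$\Leftrightarrow$(b) you invoke the correct characterization $\pi_{r,2}(S) = \sup\{\|SU\|_{S_r} : U\colon \ell_2 \to X,\ \|U\|\le 1\}$ for $S\colon X\to\ell_2$, which generalizes the $r=2$ identity the paper quotes. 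For (b)$\Leftrightarrow$(c) the factorization $A = B^*B$ and the singular-value identity $\sigma_j(B^*B)=\sigma_j(B)^2$ together with the norm identity $\|TT^*\|=\|T\|^2$ (valid when the domain of $T$ is a Hilbert space, as the paper notes) give the desired conclusion. One small slip: you write $B\colon\ell_{p'}\to\ell_2$ and then say ``(b) applied to $B$,'' but (b) is stated for operators $\ell_2\to\ell_p$; you should apply it to $B^*$, or equivalently to $A^{1/2}$ viewed as an operator $\ell_2\to\ell_p$. Since the operator norm and Schatten norms agree for an operator and its adjoint this does not affect the conclusion, but the bookkeeping is worth stating cleanly.

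The rest of your proposal --- the probabilistic attempt with i.i.d.\ isotropic test vectors $\omega$, the observation that $\E\|\omega\|_{p'}^2\gtrsim N^{2/p'}$ defeats the naive bound, the interpolation route that would give a $C_p$ but plausibly not $C_p=1$, and the diagnosis that the $p=1$ case is special because the extreme points of $B_{\ell_\infty^N}$ form an isotropic finite set of unit norm --- is an accurate assessment of the obstacles, consistent with the paper's footnote that the authors ``procrastinated with publication, mostly because we hoped to fully resolve the $1<p<2$ case.'' No gap in the sense of a wrong step; the gap is that the conjecture remains open, as you correctly conclude.
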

 
 \smallskip \noindent Confirming any of these conjectures would imply a generalization of Theorem~\ref{thm:bound-on-radius}(ii) to $p\in (1,2)$ with the (optimal) assertion $\lambda' \ge d^{1/p - 1/2}$. 
 (Actually, like in the case of the different reformulations of Lemma~\ref{lemma:norm12}, the statements (a)--(c) are formally equivalent.)
 Note that the dimension doesn't  explicitly enter any of the statements (a)--(c) above, they are just assertions about various operator ideal norms and/or (real or complex) matrices of arbitrary size. It is also conceivable that there is a simple direct proof of the inequality \eqref{pii} with $c=1$, either via change of density (see Section 1.2 in \cite{JS01}) or by some fancy interpolation argument...

\section{Survey of similar questions,  open problems} \label{section4}

Similar questions/results for other normed spaces, most often $\ell_p$-spaces or their non-commutative analogues, Schatten spaces, were found relevant to problems in various areas of mathematics and its applications. Here is a sampler of the connections and possible questions. 

Let us start with the spaces $\ell_p^N$ for $p\in (2, \infty)$ (and, say, $p\leq \log N$). In that setting, it is more convenient to use the normalized counting measure already mentioned in Remark \ref{rem1}(b).  
%following Theorem \ref{thm:bound-on-radius}.  
The proof from \cite{FLM77} supplies then almost-Euclidean subspaces that (ignoring a multiplicative $c(\e)$ constant) are of dimension $d=\Theta(p N^{\frac 2p})$ with the scaling constant $\lambda = \Theta(\sqrt{p})$ (see, e.g., Section 7.2.4.1 in \cite{AS17} for  reasonably detailed yet accessible calculations). It was further shown in \cite{FLM77} that the obtained dimension $d$ is asymptotically optimal, but the argument doesn't yield any constraints on the scaling constant. It is likely that some information in this regard can be obtained via the approach applied in the preceding section for $p\in [1,2)$. Likewise, 
the range $p\in (0,1)$ can be similarly investigated. 
(This range falls outside of the familiar context of convex geometry, but is of significant interest in applications, for example in compressed sensing; see, e.g., \cite{Don06}.) 
We leave these questions as exercises for an interested reader, but will instead mention two circles of ideas that touch on algorithmic aspects of the problem. 

First, if (and only if) $p\in 2 \N$, there {\em do exist} sections (of the unit ball $B_p^N$) that are {\em precisely} spherical (see \cite{Mil88, Kon95, KK01}). The constructions have connections to number theory, numerical analysis, coding theory, spherical designs, and combinatorics, and are completely explicit. If $p=4$, it is possible to achieve $d$ and $\lambda$ matching --- up to universal multiplicative constants --- the values obtained by the probabilistic proof. If $p=6,8,\ldots$, the existing explicit constructions are suboptimal (according to \cite{KK01}). In any case, it would be interesting to determine the precise asymptotics (for fixed $p>2$, and as $p\to \infty$) of the optimal scaling constant $\lambda$. 

Other examples of nearly-Euclidean subspaces of $L_p$ come from harmonic analysis. Let $L_p=L_p([0,2\pi])$ (with normalized Lebesgue measure) and, for $k \in \Z$, consider the functions $\chi_k(t) = e^{ikt}$, which can be thought of as characters on the circle group $\T$. For $p>2$, a set $S \subset \Z$ is called a $\Lambda_p$-set if, for some $\lambda \geq 1$, the inequality 
$\|\sum_{k\in S} t_k \chi_k\|_p \leq \lambda (\sum_{k\in S} |t_k|^2)^{1/2}$ 
holds for any sequence of scalars $(t_k)$. (For $p<2$, the inequality has to be reversed.) Since the converse inequality holds always, without any multiplicative constant, this means that the subspace of $L_p$ spanned by $(\chi_k)_{k\in S}$ is nearly-Euclidean. Again, if $p\in 2 \N$, there are explicit (based on combinatorics and finite geometries) examples of sets $S \subset \{1,2,\ldots,N\}$ of cardinality $\Theta(N^{\frac 2p})$ such that $S$ is a $\Lambda_p$-set with $\lambda=\Theta(\sqrt{p})$ (\cite{Rudin}). For other $p\in (2,\infty)$, subsets $S$ of cardinality $\Theta(N^{\frac 2p})$ and with $\lambda\leq C(p)$ can be found using rather sophisticated probabilistic methods (see \cite{Bou89,Talagrand,Bou01}). It would be very interesting to produce such $\Lambda_p$-sets that are either explicit, or generated by a feasible algorithms, particularly for $p\in(2,4)$. 

It may appear to the reader that the last example is ``out of character'' in that it deals with continuous setting, functions on an interval rather than $\ell_p^N$. However, this discordance is easily remedied, either by discretization, or by working with characters on the group $\Z_N$ (or, say, $\Z_2^m$, $m=\log_2 N$) to begin with. 

Finally, it should be mentioned that 
$\Lambda_p$-sets of extremal size are not possible if $p<2$. Indeed, if $S \subset \{1,2,\ldots,N\}$ satisfies $\#S = \Omega(N)$, then --- by Szemer\'edi's theorem \cite{Sze75} --- $S$ contains, for large $N$, long arithmetic progressions, which precludes it from being a $\Lambda_p$-set for $p\neq 2$. (An analogous argument is even simpler for groups such as $\Z_2^m$.) On the other hand, for $p\in (0,2)$, $\Lambda_p$-sets with $\#S = \Omega(N^\alpha)$, $\alpha<1$, exist as a consequence of the corresponding results for $p>2$ (and the H\"older inequality), but note that the latter were random constructions. 

Pecularities of scaling constants for almost- and nearly-Euclidean subspaces of $\ell_\infty^N$ were exploited in \cite{KS03} to solve (in the negative) the then almost $60$-years-old Knaster problem : {\em Given continuous function $f : S^{n-1} \to \R$ and a configuration $\{x_k\}$ of $n$ points on $S^{n-1}$, does there exist an isometry $V\in O(n)$ such that $f(Vx_k)$ is constant?} In a nutshell, the idea behind \cite{KS03} was that, for an almost-Euclidean subspaces of $\ell_\infty^N$, the scaling constant should satisfy an inequality of the kind $\sqrt{ d/N} \leq \lambda \leq C(\e) + C\sqrt{ d/N}$. This was hinted, but not precisely stated and only partially proved in \cite{KS03} (see Lemmas 3 and 4 in that paper; note that for the upper bound on $\lambda$ to hold, it may be necessary to discard ``superfluous'' coordinates). It would be interesting to complete this analysis. 

It is intriguing that, as pointed in \cite{Mil88}, a positive answer to the Knaster problem would yield a proof of Dvoretzky's theorem with optimal dependence on the parameter $\e$. While the solution in \cite{KS03} was negative, it doesn't preclude the possibility of a positive answer if the size of the configuration is noticeably smaller than $n$, say $\Omega(n^{1/2})$, which would be still sufficient for the application to Dvoretzky's theorem.

%For $p=\infty$, there is a well-known counterpart to Theorem \ref{thm:bound-on-radius}(ii) and Lemma \ref{lemma:norm12} :  {\em If $E \subset \ell_\infty^N$ with $\dim E = d$ and $\|x\|_\infty\leq \lambda \|x\|_2$ for $x\in E$, then $\lambda \geq \sqrt{ d/N}$.}

Let us now turn our attention to the non-commutative $L_p$'s, the Schatten spaces $S_p=S_p^N$. It will be more convenient to work with the version of Schatten norms induced by the normalized trace. 
Further, to lighten the notation, for the remainder of this paper we will denote the $p$-Schatten norm by $\|\cdot\|_p$. That is, for an $N\times N$ matrix $A$ we will write $\|A\|_p = \left(\frac 1N \tr (A^*A)^{p/2}\right)^{1/p}$. (The limit $\|\cdot\|_\infty$ norm is the usual operator norm.) 

If $p=\infty$, we have a minor miracle. Namely, if we restrict our attention to the real vector space of Hermitian matrices, and $N=2^m$, then $S_\infty^N$ contains a subspace of $E$ of dimension $d = \Theta(m)$, on which all $p$-norms coincide or, equivalently, every $A \in E$ is a multiple of an isometry. That is, the scaling constant $\lambda$ is precisely $1$.  (Strangely, this phenomenon can not be reproduced in the complex setting; see, e.g., Section 11.1 in \cite{AS17} for more details about both settings.) This ``miracle'' allowed Tsirelson (\cite{Tsi80}) to relate the Grothendieck inequality to quantum violations in Bell inequalities; see Chapter 11 in \cite{AS17} for more background about this circle of ideas and Problem 11.27 there for a more general question about the scaling constant in the present context. 

For $p\in [1,\infty)$, the method of \cite{FLM77} yields optimal (up to multiplicative constants depending at most on $p$ and $\e$) values of the dimension of almost- and nearly-Euclidean subspaces of $S_p^N$, and gives information about the resulting scaling constant. The estimates can be tightened by more careful calculations (see, e.g., Section 7.2.4.2 and Exercises 7.23\hskip0.75mm \&\hskip 0.5mm 7.25 in \cite{AS17}) and further studied using methods parallel to Sections \ref{section2} and  \ref{section3} of the present paper, but we will not engage in such analysis here. What we would like to emphasize is that these instances of Dvoretzky's theorem, and in particular the values of the scaling constants, turned out to be highly relevant to the solutions of important additivity problems for quantum channels \cite{holevo-icm, AHW} in quantum information theory. 
Specifically, the cases $p\in (2,\infty)$ were relevant to the study of R\'enyi entropy \cite{HW,ASW10}, while a more subtle analysis of (a version of) the case $p=4$ led to results about von Neumann entropy \cite{hastings, ASW11}. 
Unfortunately, given that these results were based on random versions of Dvoretzky's theorem, we do not actually have good explicit examples of channels which yield ``quantum advantage.'' (The best result in this direction seems to be \cite{GHP}, and it is clearly just a starting point.) 

 It would be extremely interesting to find explicit (or generated by a feasible algorithm) examples of nearly-Euclidean subspaces of the Schatten space $S_p=S_p^N$ (particularly for $p\in(2,4]$), whose dimensions and scaling constants reasonably  approach those obtained by the random method. 
Unfortunately, there are no obvious non-commutative analogues of spherical designs or $\Lambda_p$-sets that do the job, in spite of considerable research on related topics such as unitary $t$-designs, see, e.g., \cite{H+23}.

\bigskip\noindent{\em Acknowledgements } The authors thank the anonymous referee for detailed comments and Guillaume Aubrun for bringing reference \cite{Pie67} to their attention. The research of the first-named author has been partially supported by grants from the National Science Foundation (U.S.A).

%\begin{thebibliography}{M-PK.3}

\vskip1cm

\end{document}